\documentclass[11pt,A4]{amsart}
\usepackage{ifthen,latexsym,amssymb,amsmath}
\bibliographystyle{plain}

\newtheorem{theorem}{Theorem}[section]
\newtheorem{proposition}[theorem]{Proposition}
\newtheorem{lemma}[theorem]{Lemma}
\newtheorem{corollary}[theorem]{Corollary}

\newcommand{\sumc}{\sum_\text{cycl}}
\newcommand{\sump}{\sum_\text{perm}}
\newcommand{\cS}[1]{\sigma^{\text{cycl}}_{#1}}
\newcommand{\pS}[1]{\sigma^{\text{perm}}_{#1}}
\newcommand{\oct}{\text{oct}}

\begin{document}

\newcommand{\sgn}{\text{sgn}}

\author{Bart\l{}omiej Bzd\c{e}ga}

\address{Adam Mickiewicz University, Pozna\'n, Poland}

\email{exul@amu.edu.pl}

\keywords{ternary cyclotomic polynomial, neighboring coefficients, nonzero coefficients}

\subjclass{11B83, 11C08}

\title{Jumps of ternary cyclotomic coefficients}

\maketitle

\begin{abstract}
It is known that two consecutive coefficients of a ternary cyclotomic polynomial $\Phi_{pqr}(x)=\sum_k a_{pqr}(k)x^k$ differ by at most one. In this paper we give a criterion on $k$ to satisfy $|a_{pqr}(k)-a_{pqr}(k-1)|=1$. We use this to prove that the number of nonzero coefficients of the $n$th ternary cyclotomic polynomial is greater than $n^{1/3}$.
\end{abstract}

\section{Introduction}

We define the $n$th cyclotomic polynomial in the following way
$$\Phi_n(x) = \prod_{1\le m\le n,\;(m,n)=1}(x-\zeta_n^m) = \sum_{k\in\mathbb{Z}}a_n(k)x^k, \quad \text{where }\zeta_n=e^{2\pi i/n}.$$
We say that a polynomial $\Phi_n$ is binary if $n$ is a product of two distinct odd primes, ternary if $n$ is a product of three distinct odd primes, etc.

The coefficients of cyclotomic polynomials are an interesting object to study. One of the most intensively studied research directions is estimating the maximal absolute value of a coefficient of $\Phi_n$ \cite{Bachman-TernaryBounds,BatemanPomeranceVaughan-Size,Beiter-TernaryI,Beiter-TernaryII,Bzdega-Ternary,Bzdega-Height}. There are also papers on the sum of the absolute values of coefficients of $\Phi_n$ \cite{BatemanPomeranceVaughan-Size,Bzdega-Height} and on the number $\theta_n$ of its nonzero coefficients \cite{Bzdega-Sparse,Carlitz-Terms}.

Ternary cyclotomic polynomials have an interesting property discovered by Gallot and Moree \cite{GallotMoree-JumpOne}: the difference between $a_{pqr}(k)$ and $a_{pqr}(k-1)$ never exceeds $1$. In this paper, for a given ternary cyclotomic polynomial $\Phi_{pqr}$, we characterize all $k$ such that $|a_{pqr}(k)-a_{pqr}(k-1)|=1$. Also we determine the number of $k$'s for which this equality holds.

We say that the coefficient $a_{pqr}(k)$ is jumping up if $a_{pqr}(k)=a_{pqr}(k-1)+1$. Analogously we define jumping down coefficients. Cyclotomic polynomials are known to be palindromic, i.e. $a_n(k)=a_n(\varphi(n)-k)$, where $\varphi(n)$ is the Euler function and the degree of $\Phi_n$. Therefore the number of jumping up and the number of jumping down coefficients are equal and we denote this number by $J_{pqr}$.

One of our main results is the following theorem.

\begin{theorem} \label{thm-lb}
For a ternary cyclotomic polynomial $\Phi_n$ we have $J_n>n^{1/3}$.
\end{theorem}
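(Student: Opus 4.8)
The plan is to derive an explicit convolution formula for $a_{pqr}(k)-a_{pqr}(k-1)$ — essentially the jump criterion in a form convenient for counting — and then to count its nonzero values. Writing $p<q<r$ and using $\Phi_{pqr}(x)=\Phi_{pq}(x^{r})/\Phi_{pq}(x)$ together with $1/\Phi_{pq}(x)=\sum_{m\ge0}\beta(m\bmod pq)\,x^{m}$, where $\beta$ equals $1$ on $[0,p)$, $-1$ on $[q,p+q)$ and $0$ elsewhere on $[0,pq)$, one gets
$$a_{pqr}(k)-a_{pqr}(k-1)=\sum_{\rho\in\{0,\,p,\,q,\,p+q\}}\varepsilon(\rho)\,a_{pq}(j_\rho)\,[\,j_\rho r\le k\,],$$
with $\varepsilon(0)=\varepsilon(p+q)=1$, $\varepsilon(p)=\varepsilon(q)=-1$, and $j_\rho\in[0,pq)$ the residue of $(k-\rho)r^{-1}$ modulo $pq$. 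Since $\deg\Phi_{pq}=pq-p-q+1<pq$, each residue class mod $pq$ carries at most one exponent of $\Phi_{pq}$, so each of the four summands lies in $\{-1,0,1\}$; by the Gallot--Moree bound the whole sum lies in $\{-1,0,1\}$, hence $k$ is a jump exactly when an odd number of the four summands is nonzero.

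A first, cheap batch of up-jumps comes from the initial segment $0\le k<r$: there $j_\rho r\le k$ forces $j_\rho=0$, i.e. $k\equiv\rho\pmod{pq}$, and $a_{pq}(0)=1\neq0$, so $k$ is a jump iff $k\bmod pq\in\{0,p,q,p+q\}$ and an up-jump iff $k\bmod pq\in\{0,p+q\}$. This contributes about $2r/pq$ up-jumps, which already proves the theorem when $r$ is large relative to $pq$, since then $2r/pq\gg n^{1/3}$.

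For the remaining cases I would partition $[0,\varphi(pqr)]$ into the $\lceil\varphi(pqr)/pq\rceil\approx(1-\tfrac1p)(1-\tfrac1q)\,r$ windows of length $pq$ and show that each window contains at least two up-jumps; summing over windows then gives $J_n\ge 2(1-\tfrac1p)(1-\tfrac1q)r+O(1)>r$, and since $p,q<r$ we have $n^{1/3}=(pqr)^{1/3}<r$, so $J_n>n^{1/3}$, a short finite list of small $r$ being absorbed by the $O(1)$ and checked by hand. The heuristic for a window being productive is that as $k$ runs through it the four residues $j_\rho$ sweep every class mod $pq$ while each threshold $[\,j_\rho r\le k\,]$ flips at most once, so the number of active $\rho$ is governed by how a half-open interval of values of $j_0$ meets the shifted supports $\mathrm{supp}(\Phi_{pq})+\rho r^{-1}\pmod{pq}$; palindromy $a_{pqr}(k)=a_{pqr}(\varphi(pqr)-k)$ pairs the bottom windows with the top ones and halves the work.

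I expect this last point to be the crux. The four step-functions $k\mapsto[\,j_\rho r\le k\,]$ are not in sync — $j_\rho$ itself depends on $k$ — so establishing that each window has an odd count of active summands for enough values of $k$, and ruling out systematic cancellation, requires an interlacing argument about where the thresholds fall relative to the explicitly known, $\pm1$-patterned support of $\Phi_{pq}$. Getting a clean per-window lower bound (rather than a messy case analysis) is where the main effort of the proof should go.
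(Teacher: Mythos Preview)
Your convolution identity $a_{pqr}(k)-a_{pqr}(k-1)=\sum_{\rho}\varepsilon(\rho)\,a_{pq}(j_\rho)\,[\,j_\rho r\le k\,]$ is correct, and the parity characterization of jumps follows cleanly from Gallot--Moree. The analysis on $[0,r)$ is also fine. But the per-window step is a genuine gap, not just a missing detail. Your heuristic that ``each threshold $[\,j_\rho r\le k\,]$ flips at most once'' is false in general: writing $s=r^{-1}\bmod pq$ and $rs=1+\ell pq$, one checks that as $k\to k+1$ the integer $m_\rho(k)=(j_\rho r-(k-\rho))/pq$ either increases by $\ell$ or decreases by $r-\ell$, and the indicator $[\,j_\rho r\le k\,]$ equals $[\,m_\rho(k)\le 0\,]$. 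Over a length-$pq$ window this quantity makes $pq-s$ upward steps and $s$ downward steps with net change $-1$, so it can oscillate across $0$ many times; the indicator is not a single step function, and the reduction to ``how an interval meets shifted supports'' does not go through. Without that, nothing in the sketch excludes a window with fewer than two up-jumps, and note that your target $J_n>r$ is strictly stronger than what the paper actually establishes, so you should expect resistance.

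The paper's argument is entirely different and avoids any window or interlacing analysis. It first proves an \emph{exact} formula
\[
J_{pqr}=R+S+T+\sumc\alpha_p\alpha_q(r-\alpha_r-\beta_r),
\]
where $\alpha_p=\min\{q^{-1}(p),r^{-1}(p),p-q^{-1}(p),p-r^{-1}(p)\}$, $\beta_p=(\alpha_p qr)^{-1}(p)$, and $R,S,T\ge 0$ are explicit nonnegative combinatorial terms; this formula is read off from a complete case table for $V(k)$ indexed by which of five intervals each of $a_k,b_k,c_k$ falls into. The lower bound is then a three-line estimate: drop $R,S,T$, use $ab\ge a+b-1$ on the remaining cyclic sum (together with $\alpha_p\le(p-1)/2$), and finish with AM--GM to obtain
\[
J_{pqr}\ge\tfrac12(p-1)+\tfrac12(q-1)+\tfrac12(r-1)>\tfrac{p+q+r}{3}>\sqrt[3]{pqr}.
\]
So the paper trades your soft convolution formula for a hard case analysis up front, after which the inequality is immediate; your route keeps the formula soft but leaves the hard combinatorics for a counting step that, as sketched, does not close.
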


One half of the total number of jumping (up or down) coefficients is the lower bound for the number of odd coefficients of $\Phi_{pqr}$ and thus it is the lower bound for the number $\theta_{pqr}$ of nonzero coefficients of $\Phi_{pqr}$. So we have

\begin{corollary} \label{cor-n0}
Let $\Phi_n$ be a ternary cyclotomic polynomial. Then $\theta_n>n^{1/3}$.
\end{corollary}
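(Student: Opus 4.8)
The plan is to derive Corollary~\ref{cor-n0} from Theorem~\ref{thm-lb}: I will show that $\Phi_n$ has at least $J_n$ odd coefficients, and since an odd coefficient is in particular nonzero, this gives $\theta_n\ge J_n>n^{1/3}$. The mechanism linking ``jumping'' to ``nonzero'' is parity: a jump changes a coefficient by exactly $\pm 1$ and hence flips its parity, so many jumps force many separate runs of odd coefficients.

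Concretely, write $n=pqr$ and $N=\varphi(n)$, so that $a_n(0)=a_n(N)=1$ is odd and $a_n(k)=0$ for $k\notin\{0,1,\dots,N\}$. By the Gallot--Moree theorem each difference $a_n(k)-a_n(k-1)$ equals $-1$, $0$ or $1$; at the indices where it is $0$ the coefficient, hence its parity, is unchanged, and at the jump indices the parity flips. Let $k_1<\dots<k_{2m}$ enumerate the indices $k\in\{1,\dots,N\}$ with $|a_n(k)-a_n(k-1)|=1$; their number is even because $a_n(0)$ and $a_n(N)$ are both odd. These indices partition $\{0,1,\dots,N\}$ into the $2m+1$ nonempty blocks $\{0,\dots,k_1-1\}$, $\{k_1,\dots,k_2-1\}$, $\dots$, $\{k_{2m},\dots,N\}$, and on each block $a_n(\cdot)$ is constant; the parities of these constants alternate, and both the first and the last block are odd by the choice of endpoints. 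Hence exactly $m+1$ of the blocks consist of odd, and in particular nonzero, coefficients, so $\theta_n\ge m+1$.

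It remains to compare $m+1$ with $J_n$. The palindromy $a_n(k)=a_n(N-k)$ carries a jump up at $k$ to a jump down at $N+1-k$ and conversely; this involution preserves the interior range $\{1,\dots,N\}$, so the $2m$ interior jumps split into $m$ jumps up and $m$ jumps down, whence $J_n$ equals $m$ or $m+1$ according to whether the two boundary jumps at $k\in\{0,N+1\}$ are counted. In either case $\theta_n\ge m+1\ge J_n>n^{1/3}$, which is the corollary. The whole reduction is routine once Theorem~\ref{thm-lb} is available; the only point that needs a little care is treating the boundary coefficients $a_n(0),a_n(N)$ together with the neighbouring zero coefficients consistently, so that the number of interior jumps is genuinely even and the blocks of odd coefficients really do number at least $J_n$. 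I do not anticipate any substantial obstacle.
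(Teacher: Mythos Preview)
Your argument is correct and is exactly the parity-flip mechanism the paper invokes in the sentence preceding the corollary: jumps flip the parity of the coefficients, so the sequence $a_n(0),\dots,a_n(N)$ breaks into alternating odd/even constant blocks, and the number of odd blocks is at least $J_n$. Your treatment of the boundary indices and of the palindromy involution $k\mapsto N+1-k$ (which has no fixed point since $N=(p-1)(q-1)(r-1)$ is even) simply spells out what the paper leaves implicit, so this is the same proof with more detail rather than a different route.
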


We do not know if for every $\epsilon>0$ there exist infinite classes of ternary cyclotomic polynomials $\Phi_n$ with $J_n<n^{1/3+\varepsilon}$. However, under some strong assumptions, we can prove that they do.

\begin{theorem} \label{thm-smallJ}
Let $\varepsilon>0$. If $q$ is a Germain prime, $q+1$ has a prime divisor $p>q^{1-\varepsilon}$ and $r=2q+1$, then $J_n<10n^{1/(3-\varepsilon)}$, where $n=pqr$.
\end{theorem}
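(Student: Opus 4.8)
The plan is to combine our characterization of jumping coefficients with the arithmetic peculiarities of $p,q,r$ to show that $\Phi_n$ has only $O(q)$ jumping coefficients; the bound then follows because $n$ is of size roughly $q^3$. First I record the relevant congruences. Since $p\mid q+1$ we have $q\equiv-1\pmod p$, hence $q^{-1}\equiv p-1\pmod p$, and also $r=2q+1\equiv-1\pmod p$; moreover $r\equiv1\pmod q$. Writing $m=(q+1)/p$, we have $p^{-1}\equiv m\pmod q$, so the classical closed form of binary cyclotomic polynomials gives $\Phi_{pq}(x)=\bigl(\sum_{i=0}^{m-1}x^{ip}\bigr)\bigl(\sum_{j=0}^{p-2}x^{jq}\bigr)-x^{-q}\sum_{i=m}^{q-1}x^{ip}$; in particular $\Phi_{pq}$ has exactly $m(p-1)+(q-m)=2q+1-2m<2q$ nonzero coefficients, all equal to $\pm1$, with disjoint positive and negative supports. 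I will also use the identity $\Phi_{pqr}(x)\Phi_{pq}(x)=\Phi_{pq}(x^r)$, equivalently $\Phi_{pqr}(x)=\Phi_{pq}(x^r)\cdot(1+x+\dots+x^{p-1})(1-x^q)\sum_{j\ge0}x^{pqj}$, so that each coefficient $a_n(k)$, hence each difference $a_n(k)-a_n(k-1)$, is a bounded signed count of representations of $k$ assembled from the sparse support of $\Phi_{pq}$, the step $pq$, and the $2p$ exponents of $(1+\dots+x^{p-1})(1-x^q)$.

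Next I apply our jump criterion: it identifies the jumping $k$ with the indices at which the above signed count changes, and this is controlled by $\Phi_{pq}$ together with the residues of $q$ and $r$ modulo $p,q,r$. Under our hypotheses these residues are as degenerate as possible: $r\equiv1\pmod q$ makes multiplication by $r$ act trivially modulo $q$, and $q\equiv r\equiv-1\pmod p$ forces the residues modulo $p$ entering the criterion to coincide. I expect these two degeneracies to collapse the generically two-parameter family of candidate jump locations (of size $\asymp pq$) to an essentially one-parameter family governed by the $<2q$ nonzero coefficients of $\Phi_{pq}$, each of which contributes only a bounded number of up-jumps in the degree window $[0,\varphi(n)]$. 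Counting these and invoking the palindromy $a_n(k)=a_n(\varphi(n)-k)$ to equate up- and down-jumps gives, with the constants tracked, $J_n<10q$. Finally, $p>q^{1-\varepsilon}$ and $r=2q+1>2q$ give $n=pqr>q^{1-\varepsilon}\cdot q\cdot 2q=2q^{3-\varepsilon}>q^{3-\varepsilon}$, so $q<n^{1/(3-\varepsilon)}$ and hence $J_n<10q<10\,n^{1/(3-\varepsilon)}$.

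The hard part is the collapse step. In general our jump criterion intertwines the full combinatorics of $\Phi_{pq}$ with the residues of both $q$ and $r$ modulo all three primes, and for typical $p,q,r$ this produces on the order of $pq$ jumping coefficients; one must verify carefully that the special congruences $q\equiv r\equiv-1\pmod p$ and $r\equiv1\pmod q$ annihilate every such large family and leave only $O(q)$ survivors, and then check that the resulting constant stays below $10$. This last point is comfortable: we actually have $q<(n/2)^{1/(3-\varepsilon)}$, which leaves some slack, and any finitely many small values of $q$ can be settled by direct computation if the asymptotic constant does not already cover them.
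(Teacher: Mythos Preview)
Your proposal has a genuine gap: the ``collapse step'' is asserted but not proved. You correctly record the congruences $q\equiv r\equiv-1\pmod p$ and $r\equiv1\pmod q$, correctly write down the identity $\Phi_{pqr}(x)=\Phi_{pq}(x^r)/\Phi_{pq}(x)$, and correctly bound the number of nonzero coefficients of $\Phi_{pq}$ by $2q$. But none of this is connected to an actual count of jumps. The sentence ``I expect these two degeneracies to collapse the generically two-parameter family\ldots'' is a hope, not an argument, and you yourself flag it as ``the hard part'' without supplying it. Moreover, the jump criterion developed in the paper (Theorem~\ref{thm-criterion}) is phrased in terms of the residue triple $(a_k,b_k,c_k)$ and the sets $\mathcal{A}^p_j,\mathcal{A}^q_j,\mathcal{A}^r_j$, not in terms of the support of $\Phi_{pq}$; your proposal never bridges that gap, so the appeal to ``our jump criterion'' does no work here.

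The paper's proof is entirely different and much more direct. It does not use $\Phi_{pq}$ at all. Instead it writes $q=tp-1$ with $t<q^{\varepsilon}$, computes the six modular inverses explicitly (e.g.\ $q^{-1}(p)=r^{-1}(p)=p-1$, $r^{-1}(q)=1$, $p^{-1}(q)=t$, $p^{-1}(r)=2t$, $q^{-1}(r)=2tp-3$), reads off $\alpha_p=\beta_p=1$, $\alpha_q=1$, $\beta_q=t$, $\alpha_r=2$, $\beta_r=2tp-2t-1$ and the relevant $\delta$'s, and then plugs these into the closed formula $J_{pqr}=R+S+T+\sum_{\mathrm{cycl}}\alpha_p\alpha_q(r-\alpha_r-\beta_r)$ of Theorem~\ref{thm-formula}. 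Each of the four pieces is bounded term by term: the cyclic sum is $<6q$, $R<2q$, $S=0$, $T<2q$, giving $J_{pqr}<10q$. Your final paragraph, deducing $q<n^{1/(3-\varepsilon)}$ from $n>q^{3-\varepsilon}$, matches the paper; what is missing is everything between the congruences and the bound $J_{pqr}<10q$.
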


If the celebrated Schinzel Hypothesis H is true then there exist infinitely many triples of primes $(p,q,r)$ satisfying conditions of Theorem \ref{thm-smallJ}. For example, we can put $(p,q,r)=(m,6m-1,12m-1)$.

The paper is organized in the following way. In the second section we recall some results form our earlier work \cite{Bzdega-Ternary}. In the third section we give a criterion on $k$ determining $V(k)=a_{pqr}(k)-a_{pqr}(k-1)\in\{-1,0,1\}$. In the fourth section we derive a formula on $J_{pqr}$ and prove Theorem \ref{thm-lb}. In the fifth section we prove Theorem \ref{thm-smallJ} and discuss the case of inclusion-exclusion polynomials.

\section{Preliminaries}

Troughout the paper we fix distinct odd primes $p$, $q$, $r$. Let us emphasize that every fact we prove for $(p,q,r)$ has also an appropriate symmetric versions.

By $a^{-1}(b)$ we denote the inverse of $a$ modulo $b$ for $(a,b)=1$. We treat this number as an integer from the set $\{1,2,\ldots,b-1\}$.

For every integer $k$ we define $F_k\in\mathbb{Z}$ and $a_k\in\{0,1,\ldots,p-1\}$, $b_k\in\{0,1,\ldots,q-1\}$, $c_k\in\{0,1,\ldots,r-1\}$ by the equation
$$k+F_kpqr = a_kqr+b_krp+c_kpq,$$
which clearly has the unique solution $(F_k,a_k,b_k,c_k)$ depending on $k$. In \cite{Bzdega-Ternary} we proved the following properties of numbers $F_k$.

\begin{proposition}[\cite{Bzdega-Ternary}, a remark before Lemma 2.1] \label{pre-012}
For $-(qr+rp+pq)<k<pqr$ we have $F_k\in\{0,1,2\}$.
\end{proposition}

\begin{proposition}[\cite{Bzdega-Ternary}, Lemma 2.2] \label{pre-2}
We have
$$F_k-F_{k-q} = \left\{\begin{array}{rl}
-1, & \text{if } a_k<r^{-1}(p) \text{ and } c_k<p^{-1}(r), \\
1, & \text{if } a_k\ge r^{-1}(p) \text{ and } c_k\ge p^{-1}(r), \\
0, & \text{otherwise.}
\end{array}\right.$$
\end{proposition}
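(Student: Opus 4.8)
The plan is to compare the defining equations for $F_k$ and $F_{k-q}$ by subtraction, after first locating $a_{k-q},b_{k-q},c_{k-q}$ relative to $a_k,b_k,c_k$. Reducing $k+F_kpqr=a_kqr+b_krp+c_kpq$ modulo $p$, $q$, $r$ in turn gives $a_k\equiv k(qr)^{-1}\pmod p$, $b_k\equiv k(rp)^{-1}\pmod q$, $c_k\equiv k(pq)^{-1}\pmod r$, and the same relations hold with $k$ replaced by $k-q$. Since $q(qr)^{-1}\equiv r^{-1}(p)\pmod p$ and $q(pq)^{-1}\equiv p^{-1}(r)\pmod r$ while $q(rp)^{-1}\equiv 0\pmod q$, I obtain $a_{k-q}\equiv a_k-r^{-1}(p)\pmod p$, $b_{k-q}\equiv b_k\pmod q$, and $c_{k-q}\equiv c_k-p^{-1}(r)\pmod r$. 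Because each of these numbers lies in its prescribed residue range, the congruences pin them down exactly: $b_{k-q}=b_k$; $a_{k-q}=a_k-r^{-1}(p)$ if $a_k\ge r^{-1}(p)$ and $a_{k-q}=a_k-r^{-1}(p)+p$ otherwise; and likewise $c_{k-q}=c_k-p^{-1}(r)$ or $c_k-p^{-1}(r)+r$ according to the sign of $c_k-p^{-1}(r)$.

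Subtracting the two defining equations and using $b_{k-q}=b_k$ leaves
$$q+(F_k-F_{k-q})pqr=(a_k-a_{k-q})qr+(c_k-c_{k-q})pq.$$
The only real arithmetic input is the identity $r^{-1}(p)\,r+p^{-1}(r)\,p=pr+1$: both sides are $\equiv 1\pmod{pr}$ by the Chinese Remainder Theorem, and the left side lies strictly between $p+r$ and $2pr-p-r$, hence equals $pr+1$ (here one uses $(p-1)(r-1)\ge 2$, valid for odd primes). Multiplying by $q$ gives $r^{-1}(p)\,qr+p^{-1}(r)\,pq=pqr+q$, which is exactly the quantity that shows up when evaluating the right-hand side above.

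It then remains to run the four cases on the inequalities for $a_k$ and $c_k$. If $a_k\ge r^{-1}(p)$ and $c_k\ge p^{-1}(r)$, the right side is $r^{-1}(p)qr+p^{-1}(r)pq=pqr+q$, forcing $F_k-F_{k-q}=1$. If both inequalities are reversed, each term picks up an extra $-pqr$, the right side becomes $q-pqr$, and $F_k-F_{k-q}=-1$. In the two mixed cases exactly one term picks up $-pqr$, the right side equals $q$, and $F_k-F_{k-q}=0$. I do not anticipate a genuine obstacle; the points needing care are the boundary convention (the equality $a_k=r^{-1}(p)$ falls in the ``$\ge$'' branch, consistent with the statement) and checking that the range estimate behind $r^{-1}(p)\,r+p^{-1}(r)\,p=pr+1$ is tight enough for the smallest admissible primes.
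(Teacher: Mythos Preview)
Your argument is correct. The paper does not actually prove this proposition; it is quoted verbatim from the author's earlier work \cite{Bzdega-Ternary} (Lemma 2.2) as a preliminary, so there is no in-paper proof to compare against. Your direct computation---identifying $a_{k-q},b_{k-q},c_{k-q}$ via residues, subtracting the two defining equations, and invoking the standard identity $r^{-1}(p)\,r+p^{-1}(r)\,p=pr+1$---is exactly the natural proof and matches the style of argument used in \cite{Bzdega-Ternary}. The boundary and range checks you flag are handled correctly.
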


\begin{proposition}[\cite{Bzdega-Ternary}, Lemma 2.3] \label{pre-4}
We have
$$F_k - F_{k-q} - F_{k-r} + F_{k-q-r} = \left\{\begin{array}{rl}
-1, & \text{if } a_k \in \mathcal{A}^p_1, \\
1, & \text{if } a_k \in \mathcal{A}^p_3, \\
0, & \text{otherwise,}
\end{array}\right.$$
where
\begin{align*}
\mathcal{A}^p_1 & = \{0,1,\ldots,p-1\}\cap\big[q^{-1}(p)+r^{-1}(p)-p,\min\{q^{-1}(p),r^{-1}(p)\}\big), \\
\mathcal{A}^p_3 & = \{0,1,\ldots,p-1\}\cap\big[\max\{q^{-1}(p),r^{-1}(p)\},q^{-1}(p)+r^{-1}(p)\big).
\end{align*}
\end{proposition}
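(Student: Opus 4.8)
\bigskip
\noindent\textbf{Proof proposal.}
The plan is to collapse the four-term alternating difference into a single indicator difference living in $\mathbb{Z}/p\mathbb{Z}$. First I would observe that reducing the defining relation $k+F_kpqr=a_kqr+b_krp+c_kpq$ modulo $p$, $q$ and $r$ shows that each of $a_k$, $b_k$, $c_k$ is a function of $k$ alone (for instance $a_k\equiv k(qr)^{-1}\pmod p$), so that the shift $k\mapsto k-r$ leaves the $c$-coordinate untouched, $c_{k-r}=c_k$, while on the $a$-coordinate it subtracts $q^{-1}(p)$ modulo $p$:
$$a_{k-r}=a_k-q^{-1}(p)+p\cdot[\,a_k<q^{-1}(p)\,],$$
where $[\,\cdot\,]$ denotes the Iverson bracket. (The same computation for $k\mapsto k-q$ is what sits behind Proposition~\ref{pre-2}.)

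Next I would rewrite Proposition~\ref{pre-2} in the single line
$$F_k-F_{k-q}=1-[\,a_k<r^{-1}(p)\,]-[\,c_k<p^{-1}(r)\,],$$
which is merely a repackaging of its three cases. Applying this identity once at $k$ and once at $k-r$ and subtracting, the $c$-contribution cancels because $c_{k-r}=c_k$, and what remains is
$$F_k-F_{k-q}-F_{k-r}+F_{k-q-r}=[\,a_{k-r}<r^{-1}(p)\,]-[\,a_k<r^{-1}(p)\,].$$
So the statement reduces to deciding when the truth value of ``$a_\bullet<r^{-1}(p)$'' changes as $a_k$ is replaced by $a_{k-r}=a_k-q^{-1}(p)\bmod p$.

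Finally I would run the elementary case analysis. Put $\alpha=a_k\in\{0,\ldots,p-1\}$, $s=q^{-1}(p)$, $t=r^{-1}(p)$; the displayed right-hand side equals $+1$ exactly when $a_{k-r}<t\le\alpha$, equals $-1$ exactly when $\alpha<t\le a_{k-r}$, and is $0$ otherwise. Splitting on whether $\alpha\ge s$ or $\alpha<s$, i.e.\ on whether subtracting $s$ wraps around $p$, one checks that $+1$ occurs precisely for $\alpha\in[\max(s,t),s+t)$ and $-1$ precisely for $\alpha\in[s+t-p,\min(s,t))$, each intersected with $\{0,\ldots,p-1\}$ — and these are exactly $\mathcal{A}^p_3$ and $\mathcal{A}^p_1$. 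The part requiring care is this last step: one must keep track of which of the two arcs the wrapped value $a_{k-r}$ falls into and verify that the endpoints $\alpha=s$ and $\alpha=t$ land on the intended side of the half-open intervals. But once the $c$-coordinate has been eliminated the whole discussion stays inside $\mathbb{Z}/p\mathbb{Z}$, so no genuine obstacle arises.
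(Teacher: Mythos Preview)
The paper does not actually prove this proposition; it is quoted verbatim from the author's earlier work \cite{Bzdega-Ternary} (Lemma~2.3 there) and used as a black box, so there is no in-paper argument to compare against.

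That said, your argument is correct and is essentially the natural one. The key steps --- rewriting Proposition~\ref{pre-2} as $F_k-F_{k-q}=1-[\,a_k<r^{-1}(p)\,]-[\,c_k<p^{-1}(r)\,]$, observing $c_{k-r}=c_k$ so that subtracting the two instances kills the $c$-term, and then doing the two-case wrap/no-wrap analysis on $a_{k-r}=a_k-q^{-1}(p)\bmod p$ --- all check out. In particular your endpoint bookkeeping is right: in the no-wrap case $\alpha\ge s$ one gets $+1$ exactly on $[\max(s,t),s+t)$ and never $-1$, while in the wrap case $\alpha<s$ one gets $-1$ exactly on $[\,s+t-p,\min(s,t))$ and never $+1$; these are precisely $\mathcal{A}^p_3$ and $\mathcal{A}^p_1$. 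One small cosmetic point: when you say ``the shift $k\mapsto k-r$ leaves the $c$-coordinate untouched,'' it might be worth writing the one-line justification $c_k\equiv k(pq)^{-1}\pmod r$ explicitly, since that is the hinge on which the whole cancellation turns.
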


\begin{proposition}[\cite{Bzdega-Ternary}, Lemma 2.4] \label{pre-8}
We have
$$F_k - F_{k-p} - F_{k-q} - F_{k-r} + F_{k-q-r} + F_{k-r-p} + F_{k-p-q} - F_{k-p-q-r} = 0.$$
\end{proposition}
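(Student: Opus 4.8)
The plan is to work with the closed form for $F_k$ that one reads off directly from its defining equation. Dividing
$$k+F_kpqr = a_kqr+b_krp+c_kpq$$
by $pqr$ gives, for every integer $k$,
$$F_k = \frac{a_k}{p}+\frac{b_k}{q}+\frac{c_k}{r}-\frac{k}{pqr}.$$
The left-hand side of Proposition \ref{pre-8} is exactly $\sum_{S\subseteq\{p,q,r\}}(-1)^{|S|}F_{k-w(S)}$, where $w(S)=\sum_{x\in S}x$: the sign attached to $F_{k-w(S)}$ is $(-1)^{|S|}$, so $F_k$ and the three terms $F_{k-q-r},F_{k-r-p},F_{k-p-q}$ carry the sign $+$, while $F_{k-p},F_{k-q},F_{k-r}$ and $F_{k-p-q-r}$ carry the sign $-$, matching the statement. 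Substituting the closed form splits this into four alternating sums — one coming from the quantities $a_\bullet/p$, one from $b_\bullet/q$, one from $c_\bullet/r$, and one from $-\,\bullet/(pqr)$ — and I will show that each of them vanishes on its own.

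For the first three, the point is that $a_k$ depends only on $k$ modulo $p$: reducing the defining equation modulo $p$ gives $a_k\equiv k\,(qr)^{-1}(p)\pmod p$, and $a_k\in\{0,1,\ldots,p-1\}$ is just the representative of this residue class. Hence $a_{k-w(S)}$ is unchanged if $p$ is inserted into, or removed from, $S$, since that alters $w(S)$ by $\pm p$. Pairing each of the four subsets $S$ not containing $p$ with $S\cup\{p\}$ — two subsets whose $a$-terms agree but whose signs are opposite — collapses the $a$-sum to $0$. The same reasoning, using that $b_k$ depends only on $k\bmod q$ and $c_k$ only on $k\bmod r$, kills the $b$-sum by toggling $q$ in each $S$ and the $c$-sum by toggling $r$.

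For the remaining sum, $\sum_{S}(-1)^{|S|}\frac{k-w(S)}{pqr} = \frac{1}{pqr}\bigl(k\sum_{S}(-1)^{|S|}-\sum_{S}(-1)^{|S|}w(S)\bigr)$; the first sum vanishes because a three-element set has four even and four odd subsets, and in the second sum the total coefficient of $p$ is $\sum_{S\ni p}(-1)^{|S|} = -1+1+1-1 = 0$, and likewise for $q$ and $r$. Adding the four contributions yields $0$, which is Proposition \ref{pre-8}.

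There is no genuine obstacle here; the only thing that needs care is the bookkeeping — matching each of the three residue classes (mod $p$, mod $q$, mod $r$) to the shift ($p$, $q$, or $r$) under which it is invariant — and checking that the linear term really does cancel. It is probably cleanest to record the displayed formula for $F_k$ as a preliminary observation, since the same formula, combined with the elementary identity $r\cdot r^{-1}(p)+p\cdot p^{-1}(r)=pr+1$ and its cyclic analogues, also gives a short proof of Proposition \ref{pre-2} and, with a little more casework, of Proposition \ref{pre-4}.
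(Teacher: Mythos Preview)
Your argument is correct. The closed form $F_k=a_k/p+b_k/q+c_k/r-k/(pqr)$ is an immediate consequence of the defining equation, the observation that $a_k,b_k,c_k$ depend only on the residue of $k$ modulo $p,q,r$ respectively is exactly right, and the pairing $S\leftrightarrow S\cup\{p\}$ (and its analogues) kills each of the three ``residue'' sums while the linear part vanishes by the standard alternating-sum identity over subsets. There is nothing to object to.

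As for comparison with the paper: this paper does not prove Proposition~\ref{pre-8} at all --- it is quoted verbatim from the author's earlier work \cite{Bzdega-Ternary} (Lemma~2.4 there) and used as a black box. So there is no ``paper's own proof'' to set your argument against. Your proof is therefore a genuine addition: it is short, self-contained, and uses nothing beyond the definition of $F_k$. The closing remark that the same displayed formula, together with identities of the type $r\cdot r^{-1}(p)+p\cdot p^{-1}(r)=pr+1$, would also yield Propositions~\ref{pre-2} and~\ref{pre-4} is plausible and worth pursuing if one wanted to make the present paper independent of \cite{Bzdega-Ternary}, but it is not needed for the statement at hand.
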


\begin{proposition}[\cite{Bzdega-Ternary}, Lemma 5.1] \label{pre-jump}
For $k=0,1,\ldots,pqr-1$ we have
\begin{align*}
& a_{pqr}(k)-a_{pqr}(k-1) \\
& = N_0(F_k,F_{k-q-r},F_{k-r-p},F_{k-p-q}) - N_0(F_{k-p},F_{k-q},F_{k-r},F_{k-p-q-r}) \\
& = N_2(F_k,F_{k-q-r},F_{k-r-p},F_{k-p-q}) - N_2(F_{k-p},F_{k-q},F_{k-r},F_{k-p-q-r}) \\
& = \frac12\big(N_1(F_{k-p},F_{k-q},F_{k-r},F_{k-p-q-r}) - N_1(F_k,F_{k-q-r},F_{k-r-p},F_{k-p-q})\big),
\end{align*}
where $N_t(s)$ denotes the number of $t$'s in the sequence $(s)$.
\end{proposition}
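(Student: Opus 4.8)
The plan is to prove the first of the three displayed identities by a direct generating-function computation, and then to deduce the other two from it together with Proposition~\ref{pre-8}. I would start from the standard product formula
\[
\Phi_{pqr}(x)=\frac{(x^{pqr}-1)(x^p-1)(x^q-1)(x^r-1)}{(x^{pq}-1)(x^{qr}-1)(x^{rp}-1)(x-1)}
\]
and expand the reciprocal of the denominator as a formal power series; since $\tfrac1{x^N-1}=-\sum_{j\ge0}x^{Nj}$, the four factors contribute the sign $(-1)^4=1$ and
\[
\frac1{(x^{pq}-1)(x^{qr}-1)(x^{rp}-1)(x-1)}=\sum_{\alpha,\beta,\gamma,\delta\ge0}x^{pq\alpha+qr\beta+rp\gamma+\delta}.
\]
Because $\deg\Phi_{pqr}=\varphi(pqr)<pqr$, for $0\le k<pqr$ only the constant term $-1$ of the factor $x^{pqr}-1$ contributes to the coefficient of $x^k$, so $a_{pqr}(k)=-[x^k]\bigl((x^p-1)(x^q-1)(x^r-1)\sum_{\alpha,\beta,\gamma,\delta\ge0}x^{pq\alpha+qr\beta+rp\gamma+\delta}\bigr)$. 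Writing $(x^p-1)(x^q-1)(x^r-1)=\sum_{S\subseteq\{p,q,r\}}(-1)^{3-|S|}x^{e(S)}$ with $e(S)=\sum_{s\in S}s$, and observing that the free summation over $\delta\ge0$ turns the relevant coefficient counts into partial sums (so that their first differences collapse), one obtains
\[
a_{pqr}(k)-a_{pqr}(k-1)=\sum_{S\subseteq\{p,q,r\}}(-1)^{|S|}Q(k-e(S)),
\]
where $Q(m)$ is the number of triples $(\alpha,\beta,\gamma)\in\mathbb{Z}_{\ge0}^3$ with $pq\alpha+qr\beta+rp\gamma=m$, and $Q(m)=0$ for $m<0$.

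The heart of the argument is the identity $Q(m)=N_0(F_m)$, valid whenever $-(qr+rp+pq)<m<pqr$. Reducing the equation $pq\alpha+qr\beta+rp\gamma=m$ modulo $p$, $q$ and $r$ and comparing it with the defining relation $m+F_mpqr=a_mqr+b_mrp+c_mpq$ forces $\beta\equiv a_m\pmod p$, $\gamma\equiv b_m\pmod q$ and $\alpha\equiv c_m\pmod r$; writing $\beta=a_m+pt$, $\gamma=b_m+qs$, $\alpha=c_m+ru$ with $s,t,u\ge0$ and substituting back gives $s+t+u=-F_m$. By Proposition~\ref{pre-012} we have $F_m\in\{0,1,2\}$ in this range, so $s+t+u=-F_m$ is solvable (and then uniquely) in nonnegative integers exactly when $F_m=0$; hence $Q(m)=N_0(F_m)$. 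For $k\in\{0,1,\ldots,pqr-1\}$ all eight indices $k-e(S)$ satisfy $-(qr+rp+pq)<k-e(S)<pqr$ — the upper bound since $e(S)\ge0$, the lower one since $p+q+r<pq+qr+rp$ — so one may substitute $Q(k-e(S))=N_0(F_{k-e(S)})$ above; grouping the subsets $S$ with $|S|$ even against those with $|S|$ odd then yields exactly $N_0(F_k,F_{k-q-r},F_{k-r-p},F_{k-p-q})-N_0(F_{k-p},F_{k-q},F_{k-r},F_{k-p-q-r})$, the first claimed equality.

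For the remaining two equalities I would not repeat this computation but argue combinatorially. Let $N_i^{+}$, resp.\ $N_i^{-}$, denote the number of entries equal to $i$ in the tuple $(F_k,F_{k-q-r},F_{k-r-p},F_{k-p-q})$, resp.\ $(F_{k-p},F_{k-q},F_{k-r},F_{k-p-q-r})$, and put $d_i=N_i^{+}-N_i^{-}$. Each entry lies in $\{0,1,2\}$ (Proposition~\ref{pre-012}), so $N_0^{\pm}+N_1^{\pm}+N_2^{\pm}=4$, whence $d_0+d_1+d_2=0$; and Proposition~\ref{pre-8} asserts precisely that the two tuples have equal sum, i.e.\ $N_1^{+}+2N_2^{+}=N_1^{-}+2N_2^{-}$, i.e.\ $d_1+2d_2=0$. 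Solving gives $d_2=d_0$ and $d_1=-2d_0$, which together with the first equality $d_0=a_{pqr}(k)-a_{pqr}(k-1)$ immediately yields the $N_2$-identity ($d_2=d_0$) and the $N_1$-identity ($\tfrac12(-d_1)=d_0$).

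The step I expect to demand the most care is the formal-power-series bookkeeping for the first identity: making precise that for $0\le k<pqr$ only the constant term of $x^{pqr}-1$ survives (this is where $\varphi(pqr)<pqr$ is used), that the passage to first differences $R(m)-R(m-1)=Q(m)$ holds at the boundary too, and that the indices with $k-e(S)<0$ cause no problem — there both $Q(k-e(S))$ vanishes (a negative integer has no representation by a nonnegative combination) and $N_0(F_{k-e(S)})$ vanishes (such an $F$ is $\ge1$, directly from its defining equation). Once $Q(m)=N_0(F_m)$ is in place, the rest is routine.
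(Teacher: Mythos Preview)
The paper does not contain a proof of this proposition; it is quoted verbatim from \cite{Bzdega-Ternary} (Lemma~5.1 there) as part of the preliminaries, so there is no in-paper argument to compare against. That said, your proposal is a correct and complete proof.

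Your generating-function derivation is sound: the sign bookkeeping in inverting the four denominator factors is right, the reduction of $[x^k]$ to the $-1$ term of $x^{pqr}-1$ is justified for $0\le k<pqr$, and the telescoping $R(m)-R(m-1)=Q(m)$ via the $\delta$-summation is exactly what is needed. The key identification $Q(m)=[F_m=0]$ is verified correctly by reducing modulo $p,q,r$, writing $\beta=a_m+pt$, $\gamma=b_m+qs$, $\alpha=c_m+ru$ with $s,t,u\ge0$, and collapsing to $s+t+u=-F_m$; combined with Proposition~\ref{pre-012} this indeed forces $Q(m)\in\{0,1\}$ with $Q(m)=1$ iff $F_m=0$. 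Your handling of the boundary (negative arguments give $Q=0$ and $F\ge1$, hence $N_0(F)=0$) is also correct. Finally, deducing the $N_2$- and $N_1$-identities from the $N_0$-identity via the linear system $d_0+d_1+d_2=0$ (each tuple has four entries in $\{0,1,2\}$) and $d_1+2d_2=0$ (Proposition~\ref{pre-8}) is clean and avoids repeating the computation. This is almost certainly the intended argument in the cited source, or a close variant of it.
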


\section{A criterion on jumping coefficients}

We define five following sets:
\begin{align*}
\mathcal{A}^p_0 & = \{0,1,\ldots,p-1\}\cap\big[0,q^{-1}(p)+r^{-1}(p)-p\big), \\
\mathcal{A}^p_1 & = \{0,1,\ldots,p-1\}\cap\big[q^{-1}(p)+r^{-1}(p)-p, \min\{q^{-1}(p),r^{-1}(p)\}\big), \\
\mathcal{A}^p_2 & = \{0,1,\ldots,p-1\}\cap\big[\min\{q^{-1}(p),r^{-1}(p)\},\max\{q^{-1}(p),r^{-1}(p)\}\big), \\
\mathcal{A}^p_3 & = \{0,1,\ldots,p-1\}\cap\big[\max\{q^{-1}(p),r^{-1}(p)\},q^{-1}(p)+r^{-1}(p)\big), \\
\mathcal{A}^p_4 & = \{0,1,\ldots,p-1\}\cap\big[q^{-1}(p)+r^{-1}(p),p\big).
\end{align*}
Similarly we define $\mathcal{A}^q_j$ and $\mathcal{A}^r_j$ for $j=0,1,2,3,4$.

By Preposition \ref{pre-jump}, we have to consider $8$-tuples
$$\oct(k) = (F_k, F_{k-p}, F_{k-q}, F_{k-r}, F_{k-q-r}, F_{k-r-p}, F_{k-p-q}, F_{k-p-q-r}).$$
We write $\oct(k)\sim(t_1,\ldots,t_8)$ if $\oct(k)=(t_1+u,\ldots,t_8+u)$ for some integer $u$. Put also
$$V(k) = a_{pqr}(k)-a_{pqr}(k-1)$$
and
$$\delta_{qr}=\left\{\begin{array}{rl}
1, & \text{if } q^{-1}(p) < r^{-1}(p), \\
0, & \text{otherwise.}
\end{array}\right.$$
Analogously we define $\delta_{rq}$, $\delta_{rp}$, $\delta_{pr}$, $\delta_{pq}$ and $\delta_{pq}$.

The following theorem derives a criterion for the $k$th coefficient of $\Phi_{pqr}$ to be jumping up or down.

\pagebreak

\begin{theorem} \label{thm-criterion}
The value $V(k)$ depends on which one of the sets $\mathcal{A}^p_{j_1}\times\mathcal{A}^q_{j_2}\times\mathcal{A}^r_{j_3}$ contains $(a_k,b_k,c_k)$ in the way described in Table \ref{tab-oct}. The notation $(j_1j_2j_3)$ in the first column means $(a_k,b_k,c_k)\in\mathcal{A}^p_{j_1}\times\mathcal{A}^q_{j_2}\times\mathcal{A}^r_{j_3}$.
\end{theorem}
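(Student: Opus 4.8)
The plan is to reduce everything to a finite case analysis on the ``combinatorial type'' of $k$, recorded by which triple $(j_1,j_2,j_3)\in\{0,1,2,3,4\}^3$ the point $(a_k,b_k,c_k)$ lands in. First I would observe that by Proposition~\ref{pre-jump} the quantity $V(k)$ depends only on $\oct(k)$, and in fact only on $\oct(k)$ up to a global shift, since each of $N_0$, $N_2$ and $N_1$ appearing there is evaluated on two disjoint $4$-subentries of $\oct(k)$ and the formula is taken as a difference; more precisely, adding a constant to all eight entries changes $N_t(\cdot)$ in both groups by the same amount, so $V(k)$ is a function of the equivalence class $\oct(k)\sim(t_1,\ldots,t_8)$. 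Hence it suffices to compute, for each of the $125$ types $(j_1j_2j_3)$, the shift-class of $\oct(k)$ and then read off $V(k)$ from the three formulas in Proposition~\ref{pre-jump}.

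The core computation is to express the seven differences $F_k-F_{k-p}$, $F_k-F_{k-q}$, $F_k-F_{k-r}$, and the three ``second differences'' $F_k-F_{k-q}-F_{k-r}+F_{k-q-r}$ and its cyclic analogues, in terms of $(j_1,j_2,j_3)$. For a single difference like $F_k-F_{k-q}$, Proposition~\ref{pre-2} already gives the answer in terms of the thresholds $r^{-1}(p)$ and $p^{-1}(r)$; I would rewrite those conditions ``$a_k<r^{-1}(p)$'' and ``$c_k<p^{-1}(r)$'' purely in terms of the index $j_1$ (resp. $j_3$) using the definitions of the $\mathcal A^p_j$ and the $\delta$-symbols that encode whether $q^{-1}(p)<r^{-1}(p)$ etc. Similarly Proposition~\ref{pre-4} gives the $p$-second-difference as $-1$, $0$, or $+1$ according to whether $a_k\in\mathcal A^p_1$, $\mathcal A^p_3$, or the complement, i.e. purely in terms of $j_1$; the same for the $q$- and $r$-versions by symmetry. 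Finally Proposition~\ref{pre-8} is the linear relation that lets me solve for the remaining entries: knowing $F_k$ up to a shift together with all seven listed first- and second-order differences determines $F_{k-p}$, $F_{k-q}$, $F_{k-r}$, $F_{k-q-r}$, $F_{k-r-p}$, $F_{k-p-q}$ and then $F_{k-p-q-r}$ one by one (using Proposition~\ref{pre-012} to pin down the actual values in $\{0,1,2\}$ when needed, though for $V(k)$ only the shift-class matters). Carrying this out yields an explicit shift-class of $\oct(k)$ for each type, and plugging into Proposition~\ref{pre-jump} produces the entry of Table~\ref{tab-oct}.

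I would organize the argument so that the symmetry under permuting $(p,q,r)$ is exploited: it suffices to handle one representative from each orbit of types under the $S_3$-action permuting $(j_1,j_2,j_3)$ (together with the induced relabeling $\delta_{qr}\leftrightarrow\delta_{rq}$, etc.), cutting the $125$ cases down to roughly a third of that, and then record in the table how the general case follows. One subtlety to watch is that the interval defining $\mathcal A^p_1$, namely $[q^{-1}(p)+r^{-1}(p)-p,\min\{q^{-1}(p),r^{-1}(p)\})$, can be empty (when $q^{-1}(p)+r^{-1}(p)<p$ only $\mathcal A^p_0$ and the shift by $p$ matter, and when it is $\ge p$ there is interplay between $\mathcal A^p_0$ and $\mathcal A^p_4$), so the mapping ``$j_1\mapsto$ value of the $p$-second-difference'' must be stated to respect these degenerate ranges; but because $a_k$ really does lie in exactly one $\mathcal A^p_j$, no case is actually lost, the degeneracies only mean some types are never realized and can be left blank in the table.

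The main obstacle is bookkeeping rather than conceptual: one must correctly translate each inequality among the inverses $q^{-1}(p), r^{-1}(p), p^{-1}(q),\ldots$ into the $\delta$-notation and keep consistent track of which of $q^{-1}(p)$ or $r^{-1}(p)$ is the ``min'' endpoint of $\mathcal A^p_2$, since the very definition of $\mathcal A^p_2$ depends on that ordering; an off-by-one or a swapped $\delta$ in any row would propagate through Proposition~\ref{pre-8} into the wrong $\oct$-class. I expect the cleanest route is to first prove a short lemma stating, for each $j_1$, the exact value in $\{-1,0,1\}$ of $F_k-F_{k-q}$ (from Proposition~\ref{pre-2}, rephrased via $\delta_{rp}$ and the position of $a_k$ and $c_k$) and of the $p$-second-difference (from Proposition~\ref{pre-4}), and symmetric statements; then assembling Table~\ref{tab-oct} is a mechanical, if lengthy, substitution.
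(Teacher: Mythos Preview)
Your overall strategy is the same as the paper's: use the symmetric versions of Propositions~\ref{pre-2} and~\ref{pre-4} to determine $F_{k-p},F_{k-q},F_{k-r}$ and then $F_{k-q-r},F_{k-r-p},F_{k-p-q}$ relative to $F_k$, close up with Proposition~\ref{pre-8} to get $F_{k-p-q-r}$, and finally read off $V(k)$ from Proposition~\ref{pre-jump}. The only organizational difference is that the paper first carries this out for the cases with $j_1,j_2,j_3\in\{1,2,3\}$ and then transfers to the boundary types $(0\ldots)$ and $(\ldots4)$ via a short lemma (Lemma~\ref{lem-help}), whereas you propose to treat all types uniformly; either route works.

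There is, however, one genuine error in your argument. Your claim that ``adding a constant to all eight entries changes $N_t(\cdot)$ in both groups by the same amount'' is false: one has $N_t(s+u)=N_{t-u}(s)$, not $N_t(s)+c(u)$, so $N_0(A+u)-N_0(B+u)=N_{-u}(A)-N_{-u}(B)$, which is not in general equal to $N_0(A)-N_0(B)$. The conclusion that $V(k)$ depends only on the shift-class of $\oct(k)$ is nevertheless correct, but for a different reason: by Proposition~\ref{pre-012} every entry of the true $\oct(k)$ lies in $\{0,1,2\}$. If the shift-class has range $2$ then there is a unique representative in $\{0,1,2\}^8$, so $\oct(k)$ itself is determined. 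If the range is at most $1$, then the true $\oct(k)$ lies in $\{0,1\}^8$ or in $\{1,2\}^8$; in the first case $N_2(A)=N_2(B)=0$ and in the second $N_0(A)=N_0(B)=0$, so in either case Proposition~\ref{pre-jump} gives $V(k)=0$. You should replace your shift-invariance justification with this argument (or simply invoke Proposition~\ref{pre-012} to pin down the actual $\oct(k)$ before applying Proposition~\ref{pre-jump}, as you half-suggest).
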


\begin{table}[!htbp] \caption{the values of $\oct(k)$ in dependence on $(a_k,b_k,c_k)$} \label{tab-oct}
\begin{center}\begin{tabular}{|c|c|c|} \hline
$(j_1j_2j_3)$ & $\oct(k) \sim$ & $V(k)$ \\ \hline
$001$ & $(0,1,1,1,1,2,2,2)$ & $1$ \\ \hline
$002$ & $(0,\delta_{pq},\delta_{qp},1,1+\delta_{qp},1+\delta_{pq},1,2)$ & $0$ \\ \hline
$003$ & $(0,0,0,1,1,1,1,2)$ & $-1$ \\ \hline
$004$ & $(0,0,0,1,1,1,0,1)$ & $0$ \\ \hline
$011$ & $(0,1,1,1,2,1,1,1)$ & $1$ \\ \hline
$012$ & $(0,\delta_{pq},\delta_{qp},1,1+\delta_{qp},\delta_{pq},1,1)$ & $\delta_{qp}$ \\ \hline
$013$ & $(0,0,0,1,1,0,1,1)$ & $0$ \\ \hline
$014$ & $(0,0,0,1,1,0,0,0)$ & $0$ \\ \hline
$022$ & $(0,\delta_{pq}+\delta_{pr}-1,\delta_{qp},\delta_{rp},\delta_{qp}+\delta_{rp},\delta_{pq},\delta_{pr},1)$ & $0$ \\ \hline
$023$ & $(1,\delta_{pr},1,1+\delta_{pr},1+\delta_{pr},1,1+\delta_{pr},2)$ & $-\delta_{rp}$ \\ \hline
$024$ & $(1,\delta_{pr},1,1+\delta_{pr},1+\delta_{pr},1,\delta_{pr},1)$ & $0$ \\ \hline
$033$ & $(1,0,1,1,1,1,1,2)$ & $-1$ \\ \hline
$034$ & $(1,0,1,1,1,1,0,1)$ & $0$ \\ \hline
$044$ & $(1,0,1,1,1,0,0,0)$ & $0$ \\ \hline
$111$ & $(0,1,1,1,1,1,1,0)$ & $0$ \\ \hline
$112$ & $(0,\delta_{pq},\delta_{qp},1,\delta_{qp},\delta_{pq},1,0)$ & $0$ \\ \hline
$113$ & $(0,0,0,1,0,0,1,0)$ & $0$ \\ \hline
$114$ & $(1,1,1,2,1,1,1,0)$ & $-1$ \\ \hline
$122$ & $(1,\delta_{pq}+\delta_{pr},1+\delta_{qp},1+\delta_{rp},$ & $\delta_{pq}\delta_{pr}-\delta_{qp}\delta_{rp}$ \\
      & $\delta_{qp}+\delta_{rp},1+\delta_{pq},1+\delta_{pr},1)$ & \\ \hline
$123$ & $(1,\delta_{pr},1,1+\delta_{rp},\delta_{rp},1,1+\delta_{pr},1)$ & $\delta_{pr}-\delta_{rp}$\\ \hline
$124$ & $(1,\delta_{pr},1,1+\delta_{rp},\delta_{rp},1,\delta_{pr},0)$ & $-\delta_{rp}$\\ \hline
$133$ & $(1,0,1,1,0,1,1,1)$ & $0$ \\ \hline
$134$ & $(1,0,1,1,0,1,0,0)$ & $0$ \\ \hline
$144$ & $(2,1,2,2,1,1,1,0)$ & $-1$ \\ \hline
$222$ & $(1,\delta_{pq}+\delta_{pr},\delta_{qr}+\delta_{qp},\delta_{rp}+\delta_{rq},$ & $0$ \\
      & $\delta_{qp}+\delta_{rp},\delta_{rq}+\delta_{pq},\delta_{pr}+\delta_{qr},1)$ & \\ \hline
$223$ & $(1,\delta_{pr},\delta_{qr},\delta_{rp}+\delta_{rq},\delta_{rp},\delta_{rq},\delta_{pr}+\delta_{qr},1)$ & $\delta_{pr}\delta_{qr}-\delta_{rp}\delta_{rq}$ \\ \hline
$224$ & $(1,\delta_{pr},\delta_{qr},\delta_{rp}+\delta_{rq},\delta_{rp},\delta_{rq},\delta_{pr}+\delta_{qr}-1,0)$ & $0$ \\ \hline
$233$ & $(1,0,\delta_{qr},\delta_{rq},0,\delta_{rq},\delta_{qr},1)$ & $0$ \\ \hline
$234$ & $(2,1,1+\delta_{qr},1+\delta_{rq},1,1+\delta_{rq},\delta_{qr},1)$ & $\delta_{rq}$ \\ \hline
$244$ & $(2,1,1+\delta_{qr},1+\delta_{rq},1,\delta_{rq},\delta_{qr},0)$ & $0$ \\ \hline
$333$ & $(1,0,0,0,0,0,0,1)$ & $0$ \\ \hline
$334$ & $(2,1,1,1,1,1,0,1)$ & $1$ \\ \hline
$344$ & $(2,1,1,1,1,0,0,0)$ & $1$ \\ \hline
\end{tabular}\end{center}\end{table}

In order to prove Theorem \ref{thm-criterion} we need the following, simple fact.

\begin{lemma} \label{lem-help}
If $(a_k,b_k,c_k)\in\mathcal{A}^p_{j_1}\times\mathcal{A}^q_{j_2}\times\mathcal{A}^r_3$ and $(a_{k'},b_{k'},c_{k'})\in\mathcal{A}^p_{j_1}\times\mathcal{A}^q_{j_2}\times\mathcal{A}^r_4$, then
$$\oct(k') \sim (F_k, F_{k-p}, F_{k-q}, F_{k-r}, F_{k-q-r}, F_{k-r-p}, F_{k-p-q}+1, F_{k-p-q-r}+1).$$
Similarly, if $k\in\mathcal{A}^p_1\times\mathcal{A}^q_{j_2}\times\mathcal{A}^r_{j_3}$ and $k'\in\mathcal{A}^p_0\times\mathcal{A}^q_{j_2}\times\mathcal{A}^r_{j_3}$, then
$$\oct(k') \sim (F_k, F_{k-p}, F_{k-q}, F_{k-r}, F_{k-q-r}-1, F_{k-r-p}, F_{k-p-q}, F_{k-p-q-r}-1).$$
\end{lemma}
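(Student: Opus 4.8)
The plan is to reconstruct the whole tuple $\oct(k)$, up to the equivalence $\sim$, from six scalar ``difference'' invariants, and then to see that crossing either of the two boundaries in the statement disturbs exactly one of them.

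First I would set $f_p=F_k-F_{k-p}$ and $h_p=F_k-F_{k-q}-F_{k-r}+F_{k-q-r}$, together with their cyclic images $f_q,h_q$ and $f_r,h_r$ under $p\mapsto q\mapsto r\mapsto p$. Because the alternating sum of the eight $F$'s indexed by the subsets of $\{p,q,r\}$ vanishes (Proposition~\ref{pre-8}), solving the defining relations of these six invariants for the coordinates of $\oct(k)$ writes every coordinate as $F_k$ plus an explicit integer combination of $f_p,f_q,f_r,h_p,h_q,h_r$; in particular
\begin{align*}
F_{k-q-r}&=F_k-f_q-f_r+h_p, & F_{k-r-p}&=F_k-f_r-f_p+h_q,\\
F_{k-p-q}&=F_k-f_p-f_q+h_r, & F_{k-p-q-r}&=F_k-f_p-f_q-f_r+h_p+h_q+h_r,
\end{align*}
while $F_k,F_{k-p},F_{k-q},F_{k-r}$ use only $f_p,f_q,f_r$. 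Hence, modulo $\sim$, the tuple $\oct(k)$ is a function of $(f_p,f_q,f_r,h_p,h_q,h_r)$ alone.

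Next I would record, using Proposition~\ref{pre-2} and Proposition~\ref{pre-4} together with their symmetric versions, that $f_p$ depends on $(b_k,c_k)$ only through the two inequalities $b_k\gtrless r^{-1}(q)$ and $c_k\gtrless q^{-1}(r)$ (and cyclically for $f_q,f_r$), whereas $h_p$ equals $-1$ on $\mathcal A^p_1$, equals $+1$ on $\mathcal A^p_3$, and vanishes otherwise (and cyclically for $h_q,h_r$). The one step that really needs care --- and I expect it to be the main obstacle --- is the bookkeeping that every threshold occurring here, namely $r^{-1}(q),q^{-1}(r),r^{-1}(p),q^{-1}(p),p^{-1}(r),p^{-1}(q)$ together with the endpoints of $\mathcal A^\bullet_1$ and $\mathcal A^\bullet_3$, is one of the breakpoints of the partition $\mathcal A^\bullet_0,\dots,\mathcal A^\bullet_4$ (for instance $r^{-1}(q)$ is either $\min\{p^{-1}(q),r^{-1}(q)\}$ or $\max\{p^{-1}(q),r^{-1}(q)\}$). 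Granting this, each of the six invariants, hence $\oct(k)$ up to $\sim$, depends on $(a_k,b_k,c_k)$ only through the triple $\mathcal A^p_{j_1}\times\mathcal A^q_{j_2}\times\mathcal A^r_{j_3}$ that contains it, which is what makes the right-hand sides of the lemma well defined.

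Finally I would evaluate the two crossings. Passing from $\mathcal A^p_{j_1}\times\mathcal A^q_{j_2}\times\mathcal A^r_3$ to $\mathcal A^p_{j_1}\times\mathcal A^q_{j_2}\times\mathcal A^r_4$: the $\mathcal A^p$- and $\mathcal A^q$-classes of $a_k,b_k$ do not move, so $h_p,h_q$ are unchanged; $f_r$ does not involve $c_k$; and $f_p,f_q$ are unchanged because $\mathcal A^r_3$ and $\mathcal A^r_4$ both lie inside $\bigl[\max\{p^{-1}(r),q^{-1}(r)\},r\bigr)$, so the inequalities $c_k\ge q^{-1}(r)$ and $c_k\ge p^{-1}(r)$ hold on both sets. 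Only $h_r$ changes, and by exactly one unit since $h_r\in\{-1,0,1\}$ is nonzero on $\mathcal A^r_3$ and zero on $\mathcal A^r_4$. By the reconstruction formula this shifts precisely the two coordinates $F_{k-p-q}$ and $F_{k-p-q-r}$ by one unit each and leaves the rest fixed, the common ambiguity $F_{k'}-F_k$ being absorbed by $\sim$; this is the asserted relation. The second crossing is identical in spirit: moving $a_k$ from $\mathcal A^p_1$ to $\mathcal A^p_0$ leaves $h_q,h_r,f_p$ untouched, leaves $f_q,f_r$ untouched because $\mathcal A^p_0$ and $\mathcal A^p_1$ both lie inside $\bigl[0,\min\{q^{-1}(p),r^{-1}(p)\}\bigr)$, and moves only $h_p$ by one unit (nonzero on $\mathcal A^p_1$, zero on $\mathcal A^p_0$); by the reconstruction formula exactly $F_{k-q-r}$ and $F_{k-p-q-r}$ are affected, as claimed.
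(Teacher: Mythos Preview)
Your argument is correct and follows essentially the same route as the paper's proof: both use Proposition~\ref{pre-2} (and its symmetric versions) to see that the three first-order differences $F_k-F_{k-p}$, $F_k-F_{k-q}$, $F_k-F_{k-r}$ are unchanged across the crossing, then Proposition~\ref{pre-4} (and its symmetric versions) to see that two of the three second-order alternating sums are unchanged while the third jumps by one, and finally Proposition~\ref{pre-8} to pin down $F_{k-p-q-r}$. Your packaging via the six invariants $f_p,f_q,f_r,h_p,h_q,h_r$ and the explicit reconstruction formulas is a bit more systematic than the paper's direct computation, but the content is identical; in particular your ``bookkeeping obstacle'' is immediate from the very definition of the partitions $\mathcal{A}^\bullet_j$, whose breakpoints are exactly the thresholds $q^{-1}(p),r^{-1}(p)$ etc.\ that appear in Propositions~\ref{pre-2} and~\ref{pre-4}.
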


\begin{proof}
Let us consider the first situation. By Proposition \ref{pre-2} and its symmetric versions, it follows that
$$F_k-F_{k-p}=F_{k'}-F_{k'-p}, \quad F_k-F_{k-q}=F_{k'}-F_{k'-q}, \quad F_k-F_{k-r}=F_{k'}-F_{k'-r}.$$
Then, by Proposition \ref{pre-4} and its symmetric versions we have
$$F_k-F_{k-q-r} = F_{k'}-F_{k'-q-r}, \qquad F_k-F_{k-r-p} = F_{k'}-F_{k'-r-p}.$$
Once more we use a symmetric version of Proposition \ref{pre-4}, which gives the equalities
\begin{align*}
F_k-F_{k-p}-F_{k-q}+F_{k-p-q} = F_{k-r}-F_{k-r-p}-F_{k-q-r}+F_{k-p-q-r} & = 1, \\
F_{k'}-F_{k'-p}-F_{k'-q}+F_{k'-p-q} = F_{k'-r}-F_{k'-r-p}-F_{k'-q-r}+F_{k'-p-q-r} & = 0.
\end{align*}
Thus the first claim is true. The proof of the second one is similar.
\end{proof}

Now we are ready to prove the main result of this section.

\begin{proof}[Proof of Theorem \ref{thm-criterion}]
We determine $\oct(k)$ up to adding an integer, so in each row of Table \ref{tab-oct} we fixed $F_k$ arbitrarily. First, consider the cases $(j_1j_2j_3)$ from Table \ref{tab-oct} which are not of the form $(0\ldots)$ or $(\ldots4)$. Using Proposition \ref{pre-2} and its symmetric versions, we obtain the values of $F_{k-p}$, $F_{k-q}$, $F_{k-r}$. Then, by Proposition \ref{pre-4} and its symmetric versions we compute $F_{k-q-r}$, $F_{k-r-p}$, $F_{k-p-q}$. At the end we use Proposition \ref{pre-8} to determine $F_{k-p-q-r}$.

We assumed that $\delta_{pq}+\delta_{qp}=1$ since if $\delta_{pq}=\delta_{qp}=0$ then $\mathcal{A}^r_2=\emptyset$ and the case is empty. The situation with $\delta_{rq}$, $\delta_{rq}$ and $\delta_{rp}$, $\delta_{pr}$ is analogous.

Now we can use Lemma \ref{lem-help} to compute $\oct(k)$ for remaining cases $(j_1j_2j_3)$: of form $(0\ldots)$ and $(\ldots4)$. After these computations the second column of Table \ref{tab-oct} is complete.

It is time to compute $V(k)$, for which we use Proposition \ref{pre-jump}. In rows which does not contain $\delta$'s the calculation is straightforward. We consider remaining cases one by one. We write $\oct(k)\sim(\ldots)$ if there is the equality up to adding an integer.
\begin{itemize}
\item[(002)] It does not matter which one of $\delta_{pq}$, $\delta_{qp}$ equals $1$, so we assume that $\delta_{pq}=1$. Then $\oct(k)=(0,1,0,1,1,2,1,2)$ and $V(k)=0$.
\item[(244)] This case is analogous to the previous one.
\item[(112)] Again, the value of $\delta_{pq}$ is not important and for $\delta_{pq}=1$ we have $\oct(k)\sim(0,1,0,1,0,1,1,0)$ and $V(k)=0$.
\item[(233)] As before, the value of $\delta_{qr}$ has no influence on $V(k)$ and we can assume $\delta_{qr}=1$. Then $\oct(k)\sim(1,0,1,0,0,0,1,1)$ and $V(k)=0$.
\item[(012)] For $\delta_{pq}=1$ we have $\oct(k)\sim(0,1,0,1,1,1,1,1)$ and $V(k)=0$. For $\delta_{qp}=1$ we have $\oct(k)=(0,0,1,1,2,0,1,1)$ and $V(k)=1$. Thus $V(k)=\delta_{qp}$.
\item[(234)] For $\delta_{qr}=1$ we have $\oct(k)\sim(2,1,2,1,1,1,1,1)$ and $V(k)=0$. For $\delta_{rq}=1$ we have $\oct(k)=(2,1,1,2,1,2,0,1)$ and $V(k)=1$. So $V(k)=\delta_{rq}$.
\item[(024)] If $\delta_{pr}=0$ then $\oct(k)\sim(1,0,1,1,1,1,0,1)$. If $\delta_{pr}=1$ then $\oct(k)\sim(1,1,1,2,2,1,1,1)$. In both cases $V(k)=0$.
\item[(123)] If $\delta_{rp}=1$, then $\oct(k)=(1,0,1,2,1,1,1,1)$ and $V(k)=-1$. If $\delta_{pr}=1$, then $\oct(k)=(1,1,1,1,0,1,2,1)$ and $V(k)=1$. So we have $V(k)=\delta_{pr}-\delta_{rp}$.
\item[(023)] For $\delta_{rp}=1$ we have $\oct(k)=(1,0,1,1,1,1,1,2)$ and $V(k)=-1$. For $\delta_{pr}=1$ we have $\oct(k)\sim(1,1,1,2,2,1,2,2)$ and $V(k)=0$. Thus $V(k)=-\delta_{rp}$.
\item[(124)] If $\delta_{rp}=1$, then $\oct(k)=(1,0,1,2,1,1,0,0)$ and $V(k)=-1$. When $\delta_{pr}=1$, we have $\oct(k)\sim(1,1,1,2,2,1,1,1)$ and $V(k)=0$. So $V(k)=-\delta_{rp}$.
\item[(022)] Note that in this case the situation $\delta_{pq}=\delta_{pr}=0$ is impossible, because then $F_{m-q-r}=F_{m-p}+3$, contradicting Proposition \ref{pre-012}. If $\delta_{pq}=\delta_{pr}=1$, then $\oct(k)\sim(0,1,0,0,0,1,1,1)$ and $V(k)=0$. If one of $\delta_{pq}$, $\delta_{pr}$ equals $1$, we assume that $\delta_{pq}=1$ (it does not matter). Then we have $\oct(k)\sim(0,0,0,1,1,1,0,1)$ and $V(k)=0$.
\item[(224)] This case is analogous to the previous one.
\item[(122)] We have the following equalities
$$\oct(k)\sim\left\{\begin{array}{rl}
(1,2,1,1,0,2,2,1) & \text{if } \delta_{pq}=\delta_{pr}=1, \\
(1,0,2,2,2,1,1,1) & \text{if } \delta_{qp}=\delta_{rp}=1, \\
(1,1,1,2,1,2,1,1) & \text{if } \delta_{pq}=\delta_{rp}=1, \\
(1,1,2,1,1,1,2,1) & \text{if } \delta_{qp}=\delta_{pr}=1,
\end{array}\right.$$
by which we obtain $V(k)=\delta_{pq}\delta_{pr}-\delta_{qp}\delta_{rp}$.
\item[(223)] Similarly as in the previous case, we have
$$\oct(k)\sim\left\{\begin{array}{rl}
(1,1,1,0,0,0,2,1) & \text{if } \delta_{pr}=\delta_{qr}=1, \\
(1,0,0,2,1,1,0,1) & \text{if } \delta_{rp}=\delta_{rq}=1, \\
(1,1,0,1,0,1,1,1) & \text{if } \delta_{pr}=\delta_{rq}=1, \\
(1,0,1,1,1,0,1,1) & \text{if } \delta_{rp}=\delta_{qr}=1.
\end{array}\right.$$
We conclude that $V(k)=\delta_{pr}\delta_{qr}-\delta_{rp}\delta_{rq}$.
\item[(222)] We have
$$\oct(k)\sim\left\{\begin{array}{rl}
(1,1,1,1,1,1,1,1) & \text{if } \delta_{pq}=\delta_{qr}=\delta_{rp}=1, \\
(1,2,1,0,0,1,2,1) & \text{if } \delta_{pq}=\delta_{qr}=\delta_{pr}=1.
\end{array}\right.$$
In both cases above we have $V(k)=0$. The remaining ones are symmetric.
\end{itemize}
Thus we verified all the cases from Table \ref{tab-oct} and the proof of Theorem \ref{thm-criterion} is complete.
\end{proof}

Let us add that there are $125$ sets of type $\mathcal{A}^p_{j_1}\times\mathcal{A}^q_{j_2}\times\mathcal{A}^r_{j_3}$. By the symmetry, using Theorem \ref{thm-criterion}, we are able to obtain all of them except two. The exceptions are $\mathcal{A}^p_0\times\mathcal{A}^q_0\times\mathcal{A}^r_0$ and $\mathcal{A}^p_4\times\mathcal{A}^q_4\times\mathcal{A}^r_4$. The next lemma justifies the lack of them in Table \ref{tab-oct} by proving that these products are empty.

\begin{lemma} \label{lem-R}
Exactly one or two of the following three inequalities
$$q^{-1}(p)+r^{-1}(p)>p,\quad r^{-1}(q)+p^{-1}(q)>q,\quad p^{-1}(r)+q^{-1}(r)>r$$
hold at the same time.
\end{lemma}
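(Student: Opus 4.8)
The plan is to work with the three quantities
$$\alpha = q^{-1}(p)+r^{-1}(p),\quad \beta = r^{-1}(q)+p^{-1}(q),\quad \gamma = p^{-1}(r)+q^{-1}(r),$$
and to pin down, modulo $p$, modulo $q$, and modulo $r$, the single integer
$$S = qr\cdot q^{-1}(p)\cdot r^{-1}(p) \;+\; rp\cdot r^{-1}(q)\cdot p^{-1}(q) \;+\; pq\cdot p^{-1}(r)\cdot q^{-1}(r)$$
or a closely related symmetric expression. The point is that each of the three inequalities in the statement is equivalent to a carry condition: the inequality $q^{-1}(p)+r^{-1}(p)>p$ holds precisely when the reduction of $\alpha$ modulo $p$ drops by $p$, i.e. when $\alpha \bmod p = \alpha - p$ rather than $\alpha$ (note $0 < \alpha < 2p$, so there is at most one carry, and $\alpha \neq p$ since $p\nmid q^{-1}(p)$). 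So if I let $\varepsilon_p,\varepsilon_q,\varepsilon_r\in\{0,1\}$ be the indicators of the three inequalities, the task is to show $\varepsilon_p+\varepsilon_q+\varepsilon_r\in\{1,2\}$, i.e. that not all three are $0$ and not all three are $1$.

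For the ``not all three equal $1$'' half, I would assume $\varepsilon_p=\varepsilon_q=\varepsilon_r=1$ and derive a contradiction by a counting/parity or size argument: summing the relation $F_k - F_{k-p} - F_{k-q} - F_{k-r} + F_{k-q-r} + F_{k-r-p} + F_{k-p-q} - F_{k-p-q-r} = 0$ of Proposition~\ref{pre-8} over a suitable range of $k$, or more directly by noticing that if all three sums of inverses exceed their modulus then a certain value $F_m$ forced by Proposition~\ref{pre-012} would have to be $3$, exactly the kind of contradiction already used in the $(022)$ case of the proof of Theorem~\ref{thm-criterion}. Concretely: the hypothesis $\varepsilon_r=\varepsilon_p=0$ being impossible was shown there because it forces $F_{m-q-r}=F_{m-p}+3$; the symmetric bookkeeping shows that $\varepsilon_p=\varepsilon_q=\varepsilon_r=1$ forces the opposite extreme, a value outside $\{0,1,2\}$ on a suitably chosen tuple, again contradicting Proposition~\ref{pre-012}. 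The ``not all three equal $0$'' half is the mirror image and follows from the same mechanism applied to $-k$ (or to $\varphi(pqr)-k$, using palindromicity), swapping the roles of $0$ and $2$.

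The cleanest route, and the one I would actually write, is to exhibit an explicit integer whose residues modulo $p$, $q$, $r$ encode the three carries, and then invoke a pigeonhole on its size. Take
$$M = \big(q^{-1}(p)+r^{-1}(p)\big)\cdot qr + \big(r^{-1}(q)+p^{-1}(q)\big)\cdot rp + \big(p^{-1}(r)+q^{-1}(r)\big)\cdot pq.$$
Computing $M$ modulo $p$ gives $qr\cdot\alpha \equiv qr\cdot\varepsilon_p\cdot p + (\text{something} \equiv p+ p) $... more precisely one checks $q\cdot q^{-1}(p)\equiv 1$ and $r\cdot r^{-1}(p)\equiv 1$, so $M \equiv r + q \pmod p$ up to the controlled carry terms, and after collecting, the carry $\varepsilon_p$ is read off from whether $M/(qr)$ lies in $[0,p)$ or $[p,2p)$. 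Doing this for all three primes, and comparing with the a priori bounds $0<M<2(qr+rp+pq)$, forces $\varepsilon_p+\varepsilon_q+\varepsilon_r$ to avoid the extreme values $0$ and $3$. The main obstacle will be getting the carry/residue bookkeeping exactly right — keeping track of which representative (in $\{1,\dots,p-1\}$ versus $\{0,\dots,p-1\}$) each inverse denotes, and making sure the three ``at most one carry each'' claims are genuinely tight — but this is routine modular arithmetic once the correct $M$ is chosen, and it mirrors the $F_k$-identities already established in Section~2.
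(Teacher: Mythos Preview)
Your proposal has the right object in hand but contains a concrete error and is missing the one identity that makes everything work. Your integer $M=\alpha\cdot qr+\beta\cdot rp+\gamma\cdot pq$ is exactly $pqr$ times the sum of fractions the paper uses. The paper's proof rests on the classical two-variable identity
\[
\frac{q^{-1}(p)}{p}+\frac{p^{-1}(q)}{q}=1+\frac{1}{pq},
\]
which one proves by noting that $q\cdot q^{-1}(p)+p\cdot p^{-1}(q)$ is $\equiv 1$ modulo both $p$ and $q$ and lies strictly between $1$ and $2pq$, hence equals $pq+1$. Summing the three symmetric versions gives
\[
\frac{\alpha}{p}+\frac{\beta}{q}+\frac{\gamma}{r}=3+\frac1{qr}+\frac1{rp}+\frac1{pq},
\]
equivalently $M=3pqr+p+q+r$. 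Since each fraction lies in $(0,2)$ and the right side lies strictly between $3$ and $4$, at least one fraction exceeds $1$ and at least one is at most $1$; that is the entire proof.

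Your claimed a priori bound $0<M<2(qr+rp+pq)$ is simply false: the exact value $M=3pqr+p+q+r$ exceeds $2(qr+rp+pq)$ for all distinct odd primes. Without the pairwise identity above, the honest size bounds from the definition give only $2(qr+rp+pq)\le M\le 6pqr-2(qr+rp+pq)$, which leaves several candidates for $M\bmod pqr$'s lift and does not by itself force $\varepsilon_p+\varepsilon_q+\varepsilon_r\in\{1,2\}$. Your first paragraph's detour through Propositions~\ref{pre-012}--\ref{pre-8} is much heavier machinery than the lemma warrants and is not actually carried out; the $(022)$ observation you cite constrains $\delta_{pq},\delta_{pr}$ for a \emph{nonempty} $\mathcal{A}^q_2\times\mathcal{A}^r_2$, which is a different (local) statement and does not directly yield the global trichotomy here. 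The fix is short: insert the two-variable identity, compute $M$ exactly, and the carry count falls out.
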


\begin{proof}
Summing the sides of the equality
$$\frac{q^{-1}(p)}p+\frac{p^{-1}(q)}q = 1+\frac1{pq}$$
with the sides of the symmetric equalities, we receive
$$\frac{q^{-1}(p)+r^{-1}(p)}p+\frac{r^{-1}(q)+p^{-1}(q)}q+\frac{p^{-1}(r)+q^{-1}(r)}r = 3+\frac1{qr}+\frac1{rp}+\frac1{pq}.$$
Hence the lemma follows.
\end{proof}

\section{A formula for $J_{pqr}$}

Before we present the announced formula, we need the following notation
$$\alpha_p=\min\{q^{-1}(p),r^{-1}(p),p-q^{-1}(p),p-r^{-1}(p)\},\quad \beta_p = (\alpha qr)^{-1}(p),$$
and similarly we define$\alpha_q$, $\alpha_r$, $\beta_q$, $\beta_r$. One can easily check that
$$\#\mathcal{A}^p_1=\#\mathcal{A}^p_3=\alpha_p, \quad \#\mathcal{A}^p_2=\beta_p-\alpha_p, \quad \#\mathcal{A}^p_0+\#\mathcal{A}^p_4=p-\alpha_p-\beta_p$$
and analogous inequalities hold for sets $\mathcal{A}_j^q$ and $\mathcal{A}_j^r$. Let also
$$\delta_p = \delta_{pq}\delta_{pr} + \delta_{rp}\delta_{qp},$$
similarly $\delta_q$ and $\delta_r$. If the first inequality from Lemma \ref{lem-R} is the only false or the only true one, then we put
$$R = \alpha_p(q-\alpha_q-\beta_q)(r-\alpha_r-\beta_r).$$
If the only true/false is the second or the third inequality, then we define $R$ analogously. In addition we put
\begin{align*}
S & = \sumc\delta_p\alpha_p(\beta_q-\alpha_q)(\beta_r-\alpha_r), \\
T & = \sump\delta_{qr}(\beta_p-\alpha_p)(\alpha_q\#\mathcal{A}^r_0 + \alpha_r\#\mathcal{A}^q_4),
\end{align*}
where
\begin{align*}
\sumc f(p,q,r) & = f(p,q,r)+f(r,p,q)+f(q,r,p), \\
\sump f(p,q,r) & = f(p,q,r)+f(r,p,q)+f(q,r,p) \\
& \quad +f(r,q,p)+f(p,r,q)+f(q,p,r).
\end{align*}

Now we are ready to present the main result of this section.

\begin{theorem} \label{thm-formula}
We have
$$J_{pqr} = R+S+T+\sumc\alpha_p\alpha_q(r-\alpha_r-\beta_r).$$
\end{theorem}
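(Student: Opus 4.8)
The plan is to convert the computation of $J_{pqr}$ into a counting problem over the $125$ boxes $\mathcal{A}^p_{j_1}\times\mathcal{A}^q_{j_2}\times\mathcal{A}^r_{j_3}$ and then to collect the contributions. First I would record the elementary fact that $k\mapsto(a_k,b_k,c_k)$ is a bijection from $\{0,1,\ldots,pqr-1\}$ onto $\{0,\ldots,p-1\}\times\{0,\ldots,q-1\}\times\{0,\ldots,r-1\}$: the tuple $(F_k,a_k,b_k,c_k)$ is unique, and conversely $aqr+brp+cpq\bmod pqr$ is injective in $(a,b,c)$ by the Chinese Remainder Theorem. Since $J_{pqr}$ is the number of $k\in\{0,\ldots,pqr-1\}$ with $V(k)=1$, this gives
\[
J_{pqr}=\sum_{0\le j_1,j_2,j_3\le4}\#\mathcal{A}^p_{j_1}\cdot\#\mathcal{A}^q_{j_2}\cdot\#\mathcal{A}^r_{j_3}\cdot\mathbf{1}\bigl[V=1\text{ on }\mathcal{A}^p_{j_1}\times\mathcal{A}^q_{j_2}\times\mathcal{A}^r_{j_3}\bigr].
\]

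Next I would go through the $33$ orbit representatives of Table \ref{tab-oct} and propagate each entry through the $S_3$-action permuting $p,q,r$ (using that $\mathcal{A}^p_j$ is already symmetric in $q,r$, and keeping track of the fact that the modulus attached to each $\delta$ changes under the action); the two missing boxes $(000)$, $(444)$ are empty by Lemma \ref{lem-R}. This identifies the boxes on which $V=1$: the $S_3$-orbits of $(001),(011),(334),(344)$, where $V\equiv1$; the orbit of $(123)$, where $V=\delta_{pr}-\delta_{rp}\in\{-1,1\}$ on every nonempty box, so that within each of its three ``mirror'' pairs (of equal cardinality) exactly one box is jumping up; and the orbits of $(012),(122),(223),(234)$, where $V=1$ precisely when a specified product of two $\delta$'s equals $1$ (the orbits of $(023)$ and $(124)$ carry $V\le0$ and never contribute).

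Then I would substitute $\#\mathcal{A}^p_1=\#\mathcal{A}^p_3=\alpha_p$, $\#\mathcal{A}^p_2=\beta_p-\alpha_p$, $\#\mathcal{A}^p_0+\#\mathcal{A}^p_4=p-\alpha_p-\beta_p$ and the cyclic analogues, and sort the contributions into four groups. The orbits of $(011)$, $(334)$ and $(123)$ only ever produce products of the shape $\alpha_p\alpha_q\,\#\mathcal{A}^r_j$ with $j\in\{0,2,4\}$, and summing these cyclically over $p,q,r$ yields the final cyclic sum of the statement. The orbits of $(122)$ and $(223)$ produce $\alpha_p(\beta_q-\alpha_q)(\beta_r-\alpha_r)$ weighted by $\delta_{pq}\delta_{pr}$ and by $\delta_{rp}\delta_{qp}$ respectively, i.e. exactly $S$ once one recognises $\delta_p=\delta_{pq}\delta_{pr}+\delta_{rp}\delta_{qp}$ (and its cyclic images) as the sum of these two. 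The orbits of $(012)$ and $(234)$ produce, respectively, the six $\#\mathcal{A}_0$-terms and the six $\#\mathcal{A}_4$-terms obtained from expanding $T=\sump\delta_{qr}(\beta_p-\alpha_p)(\alpha_q\#\mathcal{A}^r_0+\alpha_r\#\mathcal{A}^q_4)$ over $S_3$. Finally the orbits of $(001)$ and $(344)$ produce $\alpha_p\,\#\mathcal{A}^q_0\,\#\mathcal{A}^r_0$-type and $\alpha_p\,\#\mathcal{A}^q_4\,\#\mathcal{A}^r_4$-type products; here $\#\mathcal{A}^p_0$ and $\#\mathcal{A}^p_4$ cannot be separated blindly, so one invokes Lemma \ref{lem-R}: exactly one prime, say the odd one out among the three inequalities, has $\#\mathcal{A}^p_4=0$ while $\#\mathcal{A}^q_0=\#\mathcal{A}^r_0=0$ (or the complementary pattern holds), which collapses this group precisely to $R$. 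Adding the four groups gives the formula.

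The only step that is not pure bookkeeping is this last one: it is Lemma \ref{lem-R} that tells us which of $\#\mathcal{A}^p_0$, $\#\mathcal{A}^p_4$ survives and hence which prime plays the distinguished role in $R$; the handling of the orbit of $(123)$ (that it always gives $V=\pm1$ and splits evenly into jumps up and down) is the only other slightly non-obvious point. Accordingly I expect the main obstacle to be organizational rather than conceptual — correctly carrying all $33$ table rows, together with their $\delta$-labels, through every permutation and then recognising the resulting mass of monomials as $R$, $S$, $T$ and the cyclic $\alpha$-sum.
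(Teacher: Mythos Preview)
Your plan is correct and follows essentially the same route as the paper: read off from Table~\ref{tab-oct} which boxes $\mathcal{A}^p_{j_1}\times\mathcal{A}^q_{j_2}\times\mathcal{A}^r_{j_3}$ carry $V=1$, sum the cardinalities over the $S_3$-orbits, and match the resulting monomials to $R$, $S$, $T$ and the remaining cyclic sum (the paper does the same bookkeeping with its compact $\sigma^{\text{cycl}}$, $\sigma^{\text{perm}}$ notation, and likewise appeals to Lemma~\ref{lem-R} to collapse the $(001)$/$(344)$ contribution to $R$). The only cosmetic difference is that you fold the $(123)$-orbit together with $(011)$ and $(334)$ into a single cyclic sum $\sum_{\mathrm{cycl}}\alpha_p\alpha_q(\#\mathcal{A}^r_0+\#\mathcal{A}^r_2+\#\mathcal{A}^r_4)=\sum_{\mathrm{cycl}}\alpha_p\alpha_q(r-2\alpha_r)$, whereas the paper records the $(123)$ piece $\sum_{\mathrm{cycl}}\alpha_p\alpha_q(\beta_r-\alpha_r)$ separately before adding; either way one lands on the same total.
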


\begin{proof}
In order to make the notation more readable, we put
\begin{align*}
\pS{j_1j_2j_3} & = \sump \#\mathcal{A}^p_{j_1}\#\mathcal{A}^q_{j_2}\#\mathcal{A}^r_{j_3}, \\
\pS{j_1j_2j_3}(f(p,q,r)) & = \sump f(p,q,r)\#\mathcal{A}^p_{j_1}\#\mathcal{A}^q_{j_2}\#\mathcal{A}^r_{j_3}
\end{align*}
and analogously
\begin{align*}
\cS{j_1j_2j_3} & = \sumc \#\mathcal{A}^p_{j_1}\#\mathcal{A}^q_{j_2}\#\mathcal{A}^r_{j_3}, \\
\cS{j_1j_2j_3}(f(p,q,r)) & = \sumc f(p,q,r)\#\mathcal{A}^p_{j_1}\#\mathcal{A}^q_{j_2}\#\mathcal{A}^r_{j_3}
\end{align*}
By Theorem \ref{thm-criterion} we have
\begin{align*}
J_{pqr} & = \cS{001} + \cS{011} + \cS{334} + \cS{344} + \pS{123}(\delta_{pr}) \\
& \quad + \pS{012}(\delta_{qp}) + \pS{234}(\delta_{rq}) + \cS{122}(\delta_{pq}\delta_{pr}) + \cS{223}(\delta_{pr}\delta_{qr}).
\end{align*}
It is easy to observe that
$$\cS{001} + \cS{344} = \cS{100} + \cS{344} = \sumc \alpha_p\big(\#\mathcal{A}^q_0\#\mathcal{A}^r_0+\#\mathcal{A}^q_4\#\mathcal{A}^r_4\big) = R,$$
$$\cS{011} + \cS{334} = \cS{011} + \cS{411} = \sumc(p-\alpha_p-\beta_p)\alpha_q\alpha_r.$$
Now we consider sums containing $\delta$'s. The equalities above remain true, since if $\delta_{pr}+\delta_{rp}\neq1$, then the set $\mathcal{A}^q_2$ is empty. We have
$$\pS{123}(\delta_{pr}) = \cS{123}(\delta_{pr}) + \cS{321}(\delta_{rp}) = \sumc\alpha_p\alpha_q(\beta_r-\alpha_r),$$
$$\pS{012}(\delta_{qp}) + \pS{234}(\delta_{rq}) = \pS{210}(\delta_{qr}) + \pS{243}(\delta_{qr}) = T.$$
At last,
$$\cS{122}(\delta_{pq}\delta_{pr}) + \cS{223}(\delta_{pr}\delta_{qr}) = \cS{122}(\delta_{pq}\delta_{pr}) + \cS{322}(\delta_{rp}\delta_{qp}) = S.$$
By summing the received values, we get the thesis.
\end{proof}

As a consequence of Theorem \ref{thm-formula} we obtain Theorem \ref{thm-lb}.

\begin{proof}[Proof of Theorem \ref{thm-lb}]
We will use the fact that $ab\ge a+b-1$ for every positive integers $a$ and $b$. By Theorem \ref{thm-criterion} and the obvious inequality $R,S,T\ge0$, we have
\begin{align*}
J_{pqr} & \ge \sumc\alpha_p\alpha_q(r-2\alpha_r) = \frac12\sumc\alpha_p\big(\alpha_q(r-2\alpha_r)+(q-2\alpha_q)\alpha_r\big) \\
& \ge \frac12\sumc\big(\alpha_q+(r-2\alpha_r)-1+(q-2\alpha_q)+\alpha_r-1\big) \\
& = \frac12\sumc(q-\alpha_q-1+r-\alpha_r-1) \ge \frac12\sumc\big((q-1)/2+(r-1)/2\big)\\
& = (p-1)/2 + (q-1)/2 + (r-1)/2 > (p+q+r)/3 > \sqrt[3]{pqr},
\end{align*}
which completes the proof.
\end{proof}

\section{Polynomials with small $J_{pqr}$}

\begin{proof}
Let $q=tp-1$, where $3\le t<q^\varepsilon$ and let $r=2q+1=2tp-1$. Then it is not hard to verify that
$$\begin{array}{lll}
q^{-1}(p)=p-1, & r^{-1}(q)=1, & p^{-1}(r)=2t, \\
r^{-1}(p)=p-1, & p^{-1}(q)=t, & q^{-1}(r)=2tp-3, \\
\alpha_p=1, & \alpha_q=1, & \alpha_r=2, \\
\beta_p=1, & \beta_q=t, & \beta_r=2tp-2t-1, \\
\mathcal{A}^p_4=\emptyset, & \mathcal{A}^q_0=\emptyset, & \mathcal{A}^r_4=\emptyset,
\end{array}$$
and $\delta_{rp}=\delta_{pq}=1$. The remaining $\delta$'s from Theorem \ref{thm-formula} equal $0$. Thereby we have
$$\sumc\alpha_p\alpha_q(r-2\alpha_r) = (2tp-5) + 2(tp-3) + 2(p-2) < 6q$$
and
$$R= (p-\alpha_p-\beta_p)\alpha_q(r-\alpha_r-\beta_r) = (p-2)(2t-2) < 2q.$$
Since $\delta_p=\delta_q=\delta_r=0$, we have $S=0$. It remains to evaluate $T$:
\begin{align*}
T & = (\beta_q-\alpha_q)(\alpha_r\#\mathcal{A}^p_0 + \alpha_p\#\mathcal{A}^r_4) + (\beta_r-\alpha_r)(\alpha_p\#\mathcal{A}^q_0 + \alpha_q\#\mathcal{A}^p_4) \\
& = (t-1)2(p-2) < 2q.
\end{align*}
By Theorem \ref{thm-formula} we have $J_{pqr} < 10q$, while $pqr > q^{3-\varepsilon}$, so the proof is done.
\end{proof}

In a slightly more general class of the so-called inclusion-exclusion polynomials the exponent $1/3$ in Theorem \ref{thm-lb} is the best possible. We recall that
$$\Phi_{pqr}(x)=\frac{(1-x^{pqr})(1-x^p)(1-x^q)(1-x^r)}{(1-x^{qr})(1-x^{rp})(1-x^{pq})(1-x)}.$$
If we replace the assumptions that $p$, $q$, $r$ are primes by the assumption that they are pairwise coprime, then the formula above defines the inclusion-exclusion polynomial $Q_{\{p,q,r\}}$ (see \cite{Bachman-TernaryInEx}).

Let us denote by $J_{\{p,q,r\}}$ the number of jumping up coefficients of the the polynomial $Q_{\{p,q,r\}}$. As long as $p,q,r>2$, all results of our paper hold also for the polynomial $Q_{\{p,q,r\}}$.

The numbers $m$, $6m-1$, $12m-1$ are pairwise coprime for every positive integer $m$. Thus we can repeat the argument from the proof of Theorem \ref{thm-smallJ} to deduce that
$$J_{\{m,6m-1,12m-1\}} < 10(6m-1) < 15n^{1/3},$$
where $n=m(6m-1)(12m-1)$ and $m\ge3$. It derives infinitely many ternary inclusion-exclusion polynomials $Q_{\{p,q,r\}}$ for which $J_{\{p,q,r\}}<15n^{1/3}$, where $n=pqr$.

\section*{Acknowledgments}

The author would like to thank Wojciech Gajda for his remarks on this paper.


\begin{thebibliography}{gg}
\bibitem{Bachman-TernaryBounds} G. Bachman, \emph{On the coefficients of ternary cyclotomic polynomials}, J. Number Theory \textbf{100} (2003), 104--116.
\bibitem{Bachman-TernaryInEx} G. Bachman, \emph{On ternary inclusion-exclusion polynomials}, Integers \textbf{10} (2010), 623--638.
\bibitem{BatemanPomeranceVaughan-Size} P.T. Bateman, C. Pomerance, R.C. Vaughan, \emph{On the size of the coefficients of cyclotomic polynomials}, Coll. Math. Soc. J. Bolyai \textbf{34} (1981), 171--202.
\bibitem{Beiter-TernaryI} M. Beiter, \emph{Magnitude of the coefficients of the cyclotomic polynomial $F_{pqr}$}, Amer. Math. Monthly \textbf{75} (1968), 370--372.
\bibitem{Beiter-TernaryII} M. Beiter, \emph{Magnitude of the coefficients of the cyclotomic polynomial $F_{pqr}$}, II, Duke Math. J. \textbf{38} (1971), 591--594.
\bibitem{Bzdega-Ternary} B. Bzd\c{e}ga, \emph{Bounds on ternary cyclotomic coefficients}, Acta Arith., \textbf{144} (2010), 5--16.
\bibitem{Bzdega-Sparse} B. Bzd\c{e}ga, \emph{Sparse binary cyclotomic polynomials}, J. Number Theory \textbf{132} (2012), 410--413.
\bibitem{Bzdega-Height} B. Bzd\c{e}ga, \emph{On the height of cyclotomic polynomials}, Acta Arith. \textbf{152} (2012), 349--359.
\bibitem{Carlitz-Terms} L. Carlitz, \emph{The number of terms in the cyclotomic polynomial $F_{pq}(x)$}, Amer. Math. Monthly \textbf{73} (1966), 979--981.
\bibitem{GallotMoree-JumpOne} Y. Gallot, P. Moree, \emph{Neighboring ternary cyclotomic coefficients differ by at most one}, J. Ramanujan Math. Soc. \textbf{24} (2009), 235--248.
\bibitem{GallotMoree-BeiterCounter} Y. Gallot, P. Moree, \emph{Ternary cyclotomic polynomials having a large coefficient}, J. Reine Angew. Math. \textbf{632} (2009), 105--125.
\end{thebibliography}
\end{document}